\newtheorem{theorem}{Theorem}
\newtheorem{lemma}{Lemma}
\newcommand{\FT}[0]{\mathbb{F}_2}
\def\en{
        \begin{pmatrix}
                0      \\
                \vdots \\
                0      \\
                1
        \end{pmatrix}
        }
\def\ones{
        \begin{pmatrix}
                1      \\
                \vdots \\
                1      \\
                1
        \end{pmatrix}
        }
\begin{document}
\title{Low discrepancy sequences  failing  Poissonian  pair correlations}

\author{Ver\'onica Becher \and Olivier Carton \and Ignacio Mollo Cunningham}

\date{February 28, 2019}

\begin{abstract}
  M. Levin defined a real number $x$ that satisfies that the sequence of
  the fractional parts of $(2^n x)_{n\geq 1}$ are such that the first $N$
  terms have discrepancy $O((\log N)^2/ N)$, which is the smallest
  discrepancy known for this kind of parametric sequences.  In this work we
  show that the fractional parts of the sequence $(2^n x)_{n\geq 1}$ fail
  to have Poissonian pair correlations.  Moreover, we show that all the
  real numbers $x$ that are variants of Levin's number using Pascal
  triangle matrices are such that the fractional parts of the sequence
  $(2^n x)_{n\geq 1}$ fail to have Poissonian pair correlations.
\end{abstract}
\bigskip
\maketitle

\noindent
\textbf{Mathematics Subject Classification:}
68R15,
11K16,
11K38

\section{Introduction and statement of results}

A sequence $(x_n)_{n \geq 1}$ of real numbers in the unit interval is said
to have \emph{Poissonian pair correlations} if for all non-negative real
numbers $s$,
\begin{displaymath}
  \lim_{N\to\infty}F_N(s) = 2s
\end{displaymath}
where
\begin{displaymath}
      F_N(s)=\frac{1}{N}\#\left\{\,(i,j)\,: \,1\leq i\neq j\leq N \text{ and } \Vert x_i - x_j \Vert < \frac{s}{N} \, \right\}. \label{pairs}
\end{displaymath}
and $\Vert x \Vert$ is the distance between $x$ and its nearest integer.
The function $F_N(s)$ counts the number of pairs $(x_n,x_m)$ for $1\leq m,n
\leq N$, $m\not = n$, of points which are within distance at most $s/N$ of
each other, in the sense of distance on the torus.  If $\lim_{N\to \infty}
F(s)=2s$ for all $s\geq 0$ then the asymptotic distribution of the pair
correlations of the sequence is Poissonian, and this explains that the
property is referred as having Poissonian pair correlations.  Almost surely
a sequence of independent identically distributed random variables in the
unit interval has this property.  Several particular sequences have been
proved to have the property, for example $(\sqrt n \mod 1)_{n\geq
  1}$~\cite{ElBazMarklofVinograd}.  It is known that for almost all real
numbers~$x$ $(a_n x \mod 1)_{n\geq 1}$ has the property when $a_n$ is
integer valued and $(a_n)_{n\geq 1}$ is
lacunary~\cite{RudnickZaharescu1999}; also when $a_n= n^2$ (or a higher
polynomial)~\cite{RudnickSarnackZaharescu,RudnickSarnak}.  However, for
specific values such as $x=\sqrt{2}$ and $a_n= n^2$ it is not known whether
$(a_n x \mod 1)_{n\geq 1}$ has Poissonian pair correlations or~not.

The property of Poissonian pair correlations implies uniform distribution
modulo~$1$, this was only recently proved in \cite[Theorem~1]{ALP2018} and
also in \cite[Corollary~1.2]{GrepstadLarcher}.  The converse does not
always hold.  Several uniformly distributed sequences of the form $(b^n x
\mod 1)_{n\geq 1}$ where $b$ is an integer greater than~$1$ and $x$ is a
constant were proved to fail the property of Poissonian pair correlations.
Pirsic and Stockinger~\cite{PirsicStockinger} proved it for Champernowne's
constant (defined in base~$b$).  Larcher and
Stockinger~\cite{LarcherStockinger} proved it for $x$ a Stoneham
number~\cite{Stoneham1973} and for every real number $x$ having an
expansion which is an infinite de Bruijn word~(see
\cite{BecherHeiber,Ugalde} for the presentation of these infinite words).
Larcher and Stockinger in~\cite{LS} also show the failure of the property
for other sequences of the form $(a_n x \mod 1)_{n\geq 1 }$.

In this paper we show that the sequence $(2^n \lambda \mod 1)_{n\geq 1}$,
where $\lambda$ is the real number defined by Levin in \cite[Theorem
2]{Levin1999}, fails to have Poissonian pair correlations.  Levin's number
$\lambda$ is defined constructively using Pascal triangle matrices and
satisfies that the discrepancy of the first $N$ terms of the sequence $(2^n
\lambda \mod 1)_{n\geq 1}$ is $O((\log N)^2/N)$. This is the smallest
discrepancy bound known for sequences of the form $(2^n x \mod 1)_{n\geq
  1}$ for some real number~$x$.

We also show that each of the real numbers $\rho$ considered by Becher and
Carton in~\cite{BecherCarton2018} are such that the sequence $(2^n \rho\mod
1)_{n\geq 1}$ fails to have Poissonian pair correlations.  These numbers
$\rho$ are variants of Levin's number $\lambda$ because they are defined
using rotations of Pascal triangle matrices and the sequence $(2^n \rho\mod
1)_{n\geq 1}$ has the same low discrepancy as that obtained by Levin.

\subsection{Levin's number}

We start by defining the number $\lambda$ given by Levin in~\cite[Theorem
2]{Levin1999} and further examined in~\cite{BecherCarton2018}.
As usual, we write $\FT$ to denote the field of two elements. In this work, 
we freely make the identification between binary words and vectors on $\FT$.
 We define recursively a sequence of matrices on $\FT$: 

\begin{displaymath}
M_0 = (1)
\quad\text{and for every $d\geq 0$},\quad
M_{d+1} = \begin{pmatrix} M_d & M_d \\ 0  & M_d \end{pmatrix}.
\end{displaymath}

The first elements of this sequence, for example, are:
\begin{displaymath}
        M_0 = (1) 
        \quad 
        M_1 = 
        \begin{pmatrix}
        1 & 1 \\ 0 & 1
        \end{pmatrix}
        \quad
        M_2 =
        \begin{pmatrix}
        1 & 1 & 1 & 1 \\
        0 & 1 & 0 & 1 \\
        0 & 0 & 1 & 1 \\
        0 & 0 & 0 & 1
        \end{pmatrix}.
\end{displaymath}

Let $d$ be a non-negative integer and let $e=2^d$.  The matrix $M_d \in
\FT^{e\times e}$ is upper triangular with $1$s on the diagonal, hence it is
non-singular.  Then, if 
\begin{displaymath}
  w_0,\dots,w_{2^e-1}
\end{displaymath}
is the enumeration of all vectors of length~$e$ in lexicographical order,
the sequence
\begin{displaymath}
  M_dw_0,\ldots,M_dw_{2^e-1}
\end{displaymath}
ranges over all vectors of length~$e$. We obtain the $d$-th block
of~$\lambda$ by concatenation of the terms of that sequence:
\begin{displaymath}
  \lambda_d = (M_dw_0)(M_dw_1)\dots(M_dw_{2^e-1}) 
\end{displaymath} 
Levin's constant $\lambda$ is  defined as the infinite concatenation
\begin{displaymath}
  \lambda = \lambda_0\lambda_1\lambda_2\dots
\end{displaymath}
The expansion of $\lambda$ in base $2$ starts as follows (the spaces are
just for convenience):
\begin{displaymath}
  {\tiny
\underbrace{01}_{\lambda_0} \ \underbrace{00\, 11\, 10\, 01}_{\lambda_1}\  
\underbrace{0000\, 1111\, 1010\, 0101\, 1100\, 0011\, 0110\, 1001\, 1000\, 0111\, 0010\, 1101\,  0100\, 1011\,  1110\, 0001}_{\lambda_2} \ 
\underbrace{00000000\, 11111111\ldots}_{\lambda_3}}
\end{displaymath}
   
Now we introduce a family $\mathcal{L}$ of constants which have similar
properties to those of~$\lambda$.  Let $\sigma$ be the rotation that takes
a word and moves its last letter at the beginning: that is,
$\sigma(a_1\dots a_n) = a_na_1\dots a_{n-1}$. We are going to use $\sigma$
to define a family of matrices obtained by selectively rotating some of the
columns of~$M_d$.

As before, assume $d$ is a non-negative integer and let $e=2^d$. We say
that a tuple $\nu=(n_1,\dots,n_e)$ of non-negative integers is
\emph{suitable} if
\begin{displaymath}
  n_e= 0 \mbox{ and } n_{i+1}\le n_i \le n_{i+1}+1 \mbox{ for each }1\le i\le e-1.
\end{displaymath} 
Let $C_1,\dots C_e$ denote the columns of $M_d$. Then, define
\begin{displaymath}
        M_d^\nu\,
                =\,
                \left(
                \sigma^{n_1} \left(C_1\right),
                \dots,
                \sigma^{n_e} \left(C_e\right)
                \right)
\end{displaymath}
For example, by taking $d=2$ we have $8$ different possible matrices, one
for every choice of $\nu$:

\begin{displaymath}
\begin{array}{cccc} 
M_2^{(0,0,0,0)} & M_2^{(1,0,0,0)} & M_2^{(1,1,0,0)} & M_2^{(2,1,0,0)} \\
\begin{pmatrix}
1 & 1 & 1 & 1 \\
0 & 1 & 0 & 1 \\
0 & 0 & 1 & 1 \\
0 & 0 & 0 & 1
\end{pmatrix}
& 
\begin{pmatrix}
0 & 1 & 1 & 1 \\
1 & 1 & 0 & 1 \\
0 & 0 & 1 & 1 \\
0 & 0 & 0 & 1
\end{pmatrix} 
& 
\begin{pmatrix}
0 & 0 & 1 & 1 \\
1 & 1 & 0 & 1 \\
0 & 1 & 1 & 1 \\
0 & 0 & 0 & 1
\end{pmatrix}
& 
\begin{pmatrix}
0 & 0 & 1 & 1 \\
0 & 1 & 0 & 1 \\
1 & 1 & 1 & 1 \\
0 & 0 & 0 & 1
\end{pmatrix} \\[10mm]
M_2^{(1,1,1,0)} & M_2^{(2,1,1,0)} & M_2^{(2,2,1,0)} & M_2^{(3,2,1,0)} \\
\begin{pmatrix}
0 & 0 & 0 & 1 \\
1 & 1 & 1 & 1 \\
0 & 1 & 0 & 1 \\
0 & 0 & 1 & 1
\end{pmatrix}
& 
\begin{pmatrix}
0 & 0 & 0 & 1 \\
0 & 1 & 1 & 1 \\
1 & 1 & 0 & 1 \\
0 & 0 & 1 & 1
\end{pmatrix} 
& 
\begin{pmatrix}
0 & 0 & 0 & 1 \\
0 & 0 & 1 & 1 \\
1 & 1 & 0 & 1 \\
0 & 1 & 1 & 1
\end{pmatrix}
& 
\begin{pmatrix}
0 & 0 & 0 & 1 \\
0 & 0 & 1 & 1 \\
0 & 1 & 0 & 1 \\
1 & 1 & 1 & 1
\end{pmatrix}
\end{array}
\end{displaymath}



As before, we let $e=2^d$ and $w_0,\dots,w_{2^e-1}$ be the increasingly
ordered sequence of all vectors in $\FT^e$.  We say that a word is an
$e$-\emph{affine necklace} if it can be written as the concatenation
$(Mw_0')(Mw_1')\dots(Mw_{2^e-1}')$ for some $z\in\FT^e$ and a suitable
tuple~$\nu$ with $M=M_d^\nu$ and $w_i'=w_i+z$ for $0 \le i \le 2^e-1$.

Finally, we define $\mathcal{L}$ to be the set of all binary words that can
be written as an infinite concatenation $\rho_0\rho_1\rho_2\dots$ where
every $\rho_d$ is a $2^d$-affine necklace. Note that
$\lambda\in\mathcal{L}$, by taking $\nu = (0,\ldots,0)$ everywhere.

The rest of this note is devoted to proving the following result:
\begin{theorem}\label{thm:main}
  For all $\rho\in\mathcal{L}$, the sequence of fractional parts of $(2^n
  \rho)_{n\geq 1}$ does not have Poissonian pair correlations.
\end{theorem}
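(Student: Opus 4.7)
Plan. My plan is to exhibit a fixed $s_0 > 0$ and a subsequence $N_d \to \infty$ along which $F_{N_d}(s_0) \to 0$, which is incompatible with $F_{N_d}(s_0) \to 2s_0$. The natural choice is $N_d := L_d = |\rho_0\rho_1\cdots\rho_d|$. With $e = 2^d$, one has $|\rho_d| = e\cdot 2^e$; since block lengths grow super-exponentially, $N_d \sim e\cdot 2^e$ and $L_{d-1}/N_d \to 0$ double-exponentially. Consequently any pair $(i,j)$ with at least one index in $[1,L_{d-1}]$ contributes only $o(N_d)$ to the count, so it suffices to bound pairs whose both indices lie in the range corresponding to $\rho_d$.

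Parametrize positions in that range as $n = L_{d-1} + ke + m$ with $0 \le k < 2^e$, $0 \le m < e$. At block-aligned positions ($m=0$) the first $e$ bits of $x_n$ equal $M_d^\nu w_k + z$, where $\nu,z$ are the parameters defining $\rho_d$. The key algebraic fact --- obtainable from the recursive block form of $M_d$ together with the suitability conditions $n_e = 0$, $n_{i+1}\le n_i\le n_{i+1}+1$, and already exploited in \cite{BecherCarton2018} --- is that every $M_d^\nu$ is invertible over $\FT$. Hence as $k$ varies the integer values represented by $M_d^\nu w_k + z$ form a permutation of $\{0,1,\dots,2^e-1\}$, so the $2^e$ block-aligned points fall one per dyadic interval of length $2^{-e}$. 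Two such points can have torus distance below $s_0/N_d \le s_0/(e\cdot 2^e)$ only when their integer labels are consecutive modulo $2^e$; there are at most $2\cdot 2^e$ such ordered candidates, contributing at most $2/e$ to $F_{N_d}(s_0)$.

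Next I would handle non-aligned positions. For same-offset pairs ($m_1 = m_2 = m$, $k_1 \ne k_2$) the relevant first-$e$-bit window is a concatenation of a suffix of $M_d^\nu w_{k_i}+z$ with a prefix of $M_d^\nu w_{k_i+1}+z$; the recursive form of $M_d$ together with the suitability of $\nu$ reduces distinctness of these windows to invertibility of a structured submatrix, giving a comparable bound. Cross-offset pairs are counted by bounding the number of matching $(e+d)$-length subwords of $\rho_d$: since $|\rho_d| = 2^{e+d}$ equals the alphabet size of such windows and the digital-net character of the construction spreads them out nearly optimally, the repeat count is $o(e\cdot 2^e)$. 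A boundary contribution from shifts near the end of $\rho_d$ whose expansions begin with the long zero prefix of $\rho_{d+1}$ adds at most $O(e)$ further ordered pairs. Combining these estimates gives $F_{N_d}(s_0)\to 0$ for $s_0$ below an explicit absolute constant.

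The main obstacle will be the cross-offset analysis: matching windows of length $e+d$ straddling consecutive subblocks $M_d^\nu w_k+z$ and $M_d^\nu w_{k+1}+z$ translate into an affine $\FT$-linear condition combining rotated columns of $M_d^\nu$, and one must verify this system has sufficiently high rank so that only a negligible fraction of pairs of positions can produce a match. I would attack this by exploiting the block form of $M_d$ to isolate a lower-triangular full-rank submatrix of the system, whose size is controlled by the suitability pattern $n_{i+1}\le n_i\le n_{i+1}+1$; this is the same phenomenon behind the low-discrepancy bound of \cite{Levin1999,BecherCarton2018}. Once the rank estimate is in hand, $F_{N_d}(s_0) \to 0$ follows along the chosen subsequence, contradicting Poissonian pair correlations.
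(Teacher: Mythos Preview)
Your plan aims at the wrong inequality. You try to show that $F_{N_d}(s_0)\to 0$ along $N_d=L_d$, whereas the paper proves the opposite phenomenon: along $N=2^{e+d+1}$ (which differs from your $N_d$ by at most a factor~$2$) one has $F_N(2)\to\infty$. Concretely, the paper exhibits, for each word $a$ of length $e+d$ satisfying $\overline{a_1\cdots a_d}=a_{e+1}\cdots a_{e+d}$ and having $k$ zeros in $a_d\cdots a_e$, exactly $k$ occurrences of $a$ inside $\rho_d$; summing $2\binom{k}{2}$ over all such $a$ gives on the order of $e^2\,2^{e}$ ordered pairs $(i,j)$ of positions in $\rho_d$ whose length-$(e+d)$ windows coincide. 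Dividing by $N_d\sim e\,2^{e}$ yields $F_{N_d}(2)\gtrsim e/8\to\infty$.

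This directly refutes the step you flag as the ``main obstacle''. Your cross-offset bound relies on the assertion that the number of matching $(e+d)$-length subwords of $\rho_d$ is $o(e\,2^{e})$; the paper's count shows it is in fact $\Theta(e^{2}\,2^{e})$. The heuristic that ``the digital-net character of the construction spreads them out nearly optimally'' is exactly backwards here: the Pascal-triangle structure, together with Lemma~\ref{bf:opposites'} (each $M_d^\nu w_{2m+1}$ is the bitwise complement of $M_d^\nu w_{2m}$), forces many different alignments $z$ to produce the \emph{same} window $a$, and the invertibility in Lemma~\ref{bf:inversible'} guarantees each such alignment is realised. Consequently the affine system you would set up for cross-offset matches does \emph{not} have the high rank you anticipate---it has a large solution set, and no rank estimate of the kind you sketch can hold. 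The correct route is the paper's: use those abundant repetitions to push $F_N(2)$ to infinity.
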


\section{Lemmas}

Now we prove some necessary results. We present in an alternating manner
results about $M_d$ and its corresponding generalizations to the family of
matrices~$M_d^\nu$.
\begin{lemma}\label{bf:inversible}
  For all $d$, $M_d$ is triangular and all entries in its diagonal are
  ones. In particular, $M_d$ is non singular.
\end{lemma}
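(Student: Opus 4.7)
The plan is a straightforward induction on $d$ using the recursive definition of $M_d$.

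First I would establish the base case $d=0$: the matrix $M_0 = (1)$ is a $1\times 1$ matrix, so it is trivially upper triangular and its single diagonal entry is~$1$.

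For the inductive step, assume $M_d$ is upper triangular with all diagonal entries equal to~$1$. By definition,
\begin{displaymath}
  M_{d+1} = \begin{pmatrix} M_d & M_d \\ 0 & M_d \end{pmatrix}.
\end{displaymath}
The bottom-left block is the zero matrix, so every entry of $M_{d+1}$ strictly below its main diagonal lies either in the bottom-left zero block or strictly below the diagonal of one of the two $M_d$ blocks on the diagonal; in either case it is~$0$ by the inductive hypothesis. Hence $M_{d+1}$ is upper triangular. Its diagonal is the concatenation of the diagonals of the two $M_d$ blocks on the block diagonal, which by inductive hypothesis consist entirely of ones.

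For the final assertion, the determinant of an upper triangular matrix equals the product of its diagonal entries; here this product is $1\neq 0$ in $\FT$, so $M_d$ is non-singular. There is no real obstacle in this argument; the only thing to be careful about is the block-matrix bookkeeping in the inductive step, which is immediate from the fact that the lower-left block is zero.
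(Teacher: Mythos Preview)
Your proof is correct and follows exactly the same approach as the paper's own proof: induction on $d$, checking the base case $M_0=(1)$ and using the block form of $M_{d+1}$ for the inductive step. You have simply spelled out the details (why the block matrix is upper triangular with unit diagonal, and the determinant argument for non-singularity) that the paper leaves implicit.
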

\begin{proof}
  This is easily proven with induction. $M_0=(1)$ satisfies the lemma, and
  if $M_d$ satisfies it, then $M_{d+1}=\begin{pmatrix} M_d & M_d \\ 0 & M_d
  \end{pmatrix}$ satisfies it too.
\end{proof}

\begin{lemma}\label{bf:inversible'}
  For all non-negative $d$ and for every suitable tuple $\nu$, $M_d^\nu$ is
  non-singular.
\end{lemma}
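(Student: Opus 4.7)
The plan is to translate the non-singularity question into linear independence of certain polynomials over $\FT$, where it becomes almost transparent.

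First I would identify columns with polynomials. By induction on $d$ using the block recursion (or directly by Lucas's theorem), one gets $(M_d)_{j,i} = \binom{i-1}{j-1} \bmod 2$. Identifying a vector $(v_1,\dots,v_e)^T\in\FT^e$ with $\sum_{j=1}^e v_j t^{j-1}$, the $i$-th column of $M_d$ becomes the polynomial $(1+t)^{i-1}$, since $\sum_{k=0}^{e-1}\binom{i-1}{k}t^k = (1+t)^{i-1}$ whenever $i \le e$.

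Second, under this identification the cyclic shift $\sigma$ is exactly multiplication by $t$ in the quotient ring $\FT[t]/(t^e-1)$; and since we work over $\FT$ with $e=2^d$, we have $t^e-1 = (t+1)^e$, so this ring is $\FT[t]/(t+1)^e$. Hence the $i$-th column of $M_d^\nu$ represents the class of $p_i(t) := t^{n_i}(1+t)^{i-1}$. A short descending induction from $n_e=0$ and $n_i \le n_{i+1}+1$ shows the suitable condition forces $n_i \le e-i$, so $\deg p_i \le e-1$ and the $p_i$ already lie in $\FT[t]_{<e}$, a canonical set of representatives for the quotient. Therefore $M_d^\nu$ is non-singular iff $p_1,\dots,p_e$ are linearly independent in $\FT[t]$.

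The heart of the argument is the invertible substitution $s = 1+t$ (equivalently $t=1+s$ over $\FT$). Under it, $p_i(t)$ becomes $q_i(s) = (1+s)^{n_i} s^{i-1}$. Since $(1+s)^{n_i}$ has constant term $1$, the $s$-valuation of $q_i$ is exactly $i-1$. As the valuations $0,1,\dots,e-1$ are pairwise distinct, in any vanishing combination $\sum a_i q_i(s) = 0$ one picks the smallest $i_0$ with $a_{i_0}\neq 0$ and reads off the coefficient of $s^{i_0-1}$: it equals $a_{i_0}$, because every $q_i$ with $i>i_0$ has valuation strictly larger than $i_0-1$ and so cannot contribute. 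This forces $a_{i_0}=0$, a contradiction.

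The main conceptual step is spotting the polynomial reformulation together with the change of variables $s = 1+t$; once these are in place the proof collapses to a one-line valuation argument. The only technical item to keep honest is the degree bound $n_i \le e-i$ coming from the suitable condition, which guarantees that the $p_i$ represent distinct classes in $\FT[t]/(t+1)^e$ rather than wrapping around, so that linear independence in $\FT[t]$ is equivalent to linear independence of the column vectors.
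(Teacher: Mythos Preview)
Your argument is correct. The identification of the $i$-th column of $M_d$ with $(1+t)^{i-1}$, the realization of $\sigma$ as multiplication by $t$ in $\FT[t]/(t^e-1)=\FT[t]/(t+1)^e$, the degree bound $n_i\le e-i$ ensuring no wraparound, and the change of variable $s=1+t$ giving distinct valuations $0,1,\dots,e-1$ are all sound and fit together cleanly.

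As for comparison: the paper does not actually prove this lemma here; it simply cites \cite[Lemma~4]{BecherCarton2018}. So your write-up is strictly more self-contained than what the present paper offers. Whether it matches the argument in the cited source cannot be judged from this paper alone, but your polynomial/valuation approach is natural for Pascal-type matrices and has the advantage of making the role of the suitable condition $n_i\le n_{i+1}+1$ completely transparent: it is exactly what keeps $\deg p_i<e$ so that the shifted columns stay faithful representatives and the valuation trick applies.
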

\begin{proof}
 This fact is proven in~\cite[Lemma 4]{BecherCarton2018}.
\end{proof}

\begin{lemma}\label{bf:opposites}
  For all $d$ and for all even $n$, $M_dw_n$ and $M_dw_{n+1}$ are
  complementary vectors. That is, the $i$-th coordinate of $M_dw_n$ equals
  zero if and only if the $i$-th coordinate of $M_dw_{n+1}$ equals one.
\end{lemma}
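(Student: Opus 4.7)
The plan is to reduce the claim to a single linear-algebraic identity, namely that the last column of $M_d$ is the all-ones vector. First I would analyse the shape of two consecutive vectors of the lexicographic enumeration $w_0,\dots,w_{2^e-1}$ of $\FT^e$. Reading each $w_k$ as a binary string of length~$e$ with the first coordinate playing the role of the most significant bit (the convention that produces the worked example $\lambda_1 = 00\ 11\ 10\ 01$ shown in the introduction), an even index~$n$ forces the last coordinate of $w_n$ to be $0$ and the last coordinate of $w_{n+1}$ to be $1$, all earlier coordinates coinciding. Hence $w_{n+1}=w_n+\en$ in $\FT^e$, and by linearity $M_d w_{n+1}=M_d w_n+M_d\en$.

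Second, I would prove by induction on~$d$ that $M_d\en=\ones$. The base case $M_0=(1)$ is immediate. For the induction step, the recursive definition
\[
M_{d+1}=\begin{pmatrix} M_d & M_d \\ 0 & M_d \end{pmatrix}
\]
shows that the last column of $M_{d+1}$ is obtained by vertically stacking two copies of the last column of $M_d$, and the inductive hypothesis identifies this with $\ones\in\FT^{2^{d+1}}$.

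Combining the two steps gives $M_d w_{n+1}=M_d w_n+\ones$; working over $\FT$, this is exactly the statement that each coordinate of $M_d w_{n+1}$ is the complement of the corresponding coordinate of $M_d w_n$. I do not expect any of these steps to be a real obstacle; the only point at which one must be careful is fixing the lexicographic-order convention, and this is pinned down unambiguously by the explicit computation of $\lambda_1$ already displayed in the introduction.
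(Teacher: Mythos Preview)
Your proof is correct and follows essentially the same route as the paper: both reduce the claim to the identity $M_d w_{n+1}=M_d w_n+\ones$ via $w_{n+1}=w_n+\en$ for even~$n$. The only difference is that you supply an explicit induction showing the last column of $M_d$ is $\ones$, whereas the paper simply asserts $M_d\en=\ones$ in passing.
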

\begin{proof}
  The sequence $w_0, w_2, \ldots w_{2^e-1}$ is lexicographically ordered and
  hence the last entry of $w_n$ is zero whenever $n$ is even.  Therefore,
  $w_{n+1}$ only differs from $w_n$ in the last entry
  \begin{displaymath}
    M_dw_{n+1} = M_dw_n+M_d\en = (M_dw_n)+\ones.
  \end{displaymath}
\end{proof}

To simplify notation, from now on we write~$\overline{z}$ for the
complementary vector of~$z$.  Note that Lemma~\ref{bf:opposites} implies
that $\lambda_d$ can be written as a concatenation of words of the form
$w\overline{w}$.

\begin{lemma}\label{bf:opposites'}
  For all non-negative $d$, for all even $n$, and for every suitable tuple
  $\nu$, the vectors $M_d^\nu w_n$ and $M_d^\nu w_{n+1}$ are complementary.
\end{lemma}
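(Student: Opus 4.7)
The plan is to adapt the proof of Lemma~\ref{bf:opposites} to the rotated setting by observing that the last column of $M_d^\nu$ coincides with the last column of $M_d$, and that this column is the all-ones vector.

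First I would establish that the last column of $M_d$ equals the all-ones vector $\ones$. This follows by a straightforward induction on $d$: the claim holds for $M_0=(1)$, and from the recursive definition
\begin{displaymath}
  M_{d+1} = \begin{pmatrix} M_d & M_d \\ 0 & M_d \end{pmatrix},
\end{displaymath}
the last column of $M_{d+1}$ is the vertical concatenation of the last column of $M_d$ with itself, which is $\ones$ of length $2^{d+1}$ by the inductive hypothesis.

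Next, I would use the hypothesis that $\nu$ is suitable. By definition this forces $n_e = 0$, so the last column of $M_d^\nu$ is $\sigma^0(C_e) = C_e = \ones$. In particular, $M_d^\nu \en = \ones$.

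Finally I would repeat the argument of Lemma~\ref{bf:opposites}: for even $n$, the vector $w_n$ has last entry $0$ (since $w_0,\ldots,w_{2^e-1}$ are in lexicographic order), so $w_{n+1} = w_n + \en$. By linearity over $\FT$,
\begin{displaymath}
  M_d^\nu w_{n+1} \;=\; M_d^\nu w_n + M_d^\nu \en \;=\; M_d^\nu w_n + \ones \;=\; \overline{M_d^\nu w_n},
\end{displaymath}
which is precisely the claim. There is no real obstacle here; the only substantive observation beyond what was done for $M_d$ is that the suitability condition $n_e=0$ was placed in the definition precisely so that the last column survives untouched by the rotations, allowing the same one-line linear-algebraic argument to go through.
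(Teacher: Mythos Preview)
Your proof is correct and follows essentially the same approach as the paper: the key observation in both is that the suitability condition $n_e=0$ leaves the last column of $M_d$ unrotated, so it remains the all-ones vector, after which the argument of Lemma~\ref{bf:opposites} applies verbatim. You simply spell out in more detail (the induction showing the last column of $M_d$ is $\ones$, and the explicit linear-algebraic computation) what the paper leaves implicit.
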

\begin{proof}
  The last coordinate of $\nu$ is zero by definition. Therefore, the last
  column of $M_d^\nu$ is the same as the last column of $M_d$; that is,
  it is the vector of ones. The same argument used to prove
  Lemma~\ref{bf:opposites} applies.
\end{proof}

We say that a vector is \emph{even} if its last entry is $0$.  Hence, when
$n$ is even, $w_n$ is an even vector.

\begin{lemma}\label{bf:even-invariance}
  Let $d$ be a non-negative integer and $e=2^d$. The subspace of all even
  vectors of length $e$,
  \begin{displaymath}
    \mathbb{P}=\{v\in\FT^{e} \mid v_{e}=0 \}
  \end{displaymath}
  is invariant under $M_d$. Furthermore, $M_dw_n$ is an even vector if and
  only if $w_n$ is an even vector.
\end{lemma}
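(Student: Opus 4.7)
The plan is to reduce the whole lemma to a single structural observation about the matrix~$M_d$: that its last row is $(0,\dots,0,1)$. Once we know this, the last coordinate of $M_d v$ equals $v_e$ for every $v\in\FT^{e}$, and both assertions of the lemma follow without further work. Invariance of $\mathbb{P}$ under $M_d$ is then immediate (and in fact $M_d$ restricts to a bijection of $\mathbb{P}$ since it is non-singular by Lemma~\ref{bf:inversible}), and the ``in particular'' statement about $w_n$ is just the special case $v=w_n$.

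First I would prove the claim about the last row by induction on $d$. The base case $M_0=(1)$ is trivial. For the inductive step, the recursion
\begin{displaymath}
  M_{d+1} = \begin{pmatrix} M_d & M_d \\ 0 & M_d \end{pmatrix}
\end{displaymath}
shows that the bottom row of $M_{d+1}$ is the concatenation of the bottom row of the lower-left zero block with the bottom row of the lower-right copy of~$M_d$. By the inductive hypothesis the latter is $(0,\dots,0,1)$, so the bottom row of $M_{d+1}$ is $(0,\dots,0,1)$ of length $2^{d+1}$, as required.

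Then I would read off the consequences. For any $v\in\FT^{e}$, the last entry of $M_d v$ is the dot product of $v$ with the last row of $M_d$, which equals~$v_e$. Hence $v\in\mathbb{P}\iff M_d v\in\mathbb{P}$, giving both the invariance of $\mathbb{P}$ and, by taking $v=w_n$, the fact that $M_d w_n$ is even iff $w_n$ is even. There is no real obstacle here; the only content is the one-line induction on the last row, which is already visible from the displayed examples of $M_1$ and~$M_2$ in the paper.
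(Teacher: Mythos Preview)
Your proof is correct and essentially the same as the paper's: both rest on the observation that the last row of $M_d$ is $(0,\dots,0,1)$, which you establish by a direct induction while the paper reads it off from the upper-triangular structure already proved in Lemma~\ref{bf:inversible} (phrased there as ``all columns except the last are even vectors''). The conclusion that $(M_d v)_e = v_e$ and hence $v\in\mathbb{P}\iff M_d v\in\mathbb{P}$ is then identical in both arguments.
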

\begin{proof}
  By Lemma~\ref{bf:inversible}, $M_d$ is upper triangular and its diagonal
  is comprised by ones.  This implies that all of its columns except the
  last are even vectors.  Therefore, the only way to obtain an odd vector
  via the computation $M_dw$ is that $w$ itself is odd.
\end{proof}

\begin{lemma}\label{bf:even-invariance'}
  Let $d$ be a non-negative integer and $e=2^d$. Depending on $\nu$, there
  are two distinct possibilities:
  \begin{enumerate}
  \item The subspace of even vectors $\mathbb{P}$ is invariant under
    $M_d^\nu$. In this case, $w$ is an even vector if and only if
    $M_d^\nu w$ is an even vector.
  \item The subspace $\mathbb{P}$ is in bijection with the subspace
    $\{ (v_1,\dots,v_e)\in\mathbb{F}_2^e \mid v_1=0 \}$ via
    $M_d^\nu$. In this case, $w$ is an even vector if and only if
    $M_d^\nu w$ has a zero in its first coordinate.
  \end{enumerate}
\end{lemma}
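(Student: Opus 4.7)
The plan is to analyze the first and last rows of $M_d^\nu$ directly in terms of~$\nu$ and to show that the two alternatives in the statement correspond exactly to $n_{e-1}=0$ and $n_{e-1}=1$. Since $n_e=0$ and $\nu$ is suitable, one has $n_{e-1}\in\{0,1\}$, so these cases are exhaustive and exclusive, giving the required dichotomy for free.

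First I would record two simple facts about the columns $C_i$ of $M_d$. By Lemma~\ref{bf:inversible}, $(C_i)_i=1$ and $(C_i)_j=0$ for $j>i$; in particular $(C_i)_e=0$ when $i<e$. An easy induction on $d$, using the recursive definition of $M_d$, shows that the first row of $M_d$ is the all-ones row, so $(C_i)_1=1$ for every $i$. Unwinding the definition of $\sigma$ then gives, for $0\le n_i\le e-1$,
\begin{displaymath}
  \bigl(\sigma^{n_i}(C_i)\bigr)_e=(C_i)_{e-n_i}
  \qquad\text{and}\qquad
  \bigl(\sigma^{n_i}(C_i)\bigr)_1=(C_i)_{e-n_i+1},
\end{displaymath}
with the cyclic convention $(C_i)_{e+1}:=(C_i)_1$. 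Suitability forces $n_i\le e-i$, so both indices lie at or below the diagonal of $C_i$ and the values above are immediate.

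Now I would split on $n_{e-1}$. If $n_{e-1}=0$, then telescoping the inequalities $n_i\le n_{i+1}+1$ yields $n_i\le e-1-i<e-i$ for every $i<e$, so $\bigl(\sigma^{n_i}(C_i)\bigr)_e=0$. Together with $(C_e)_e=1$, this makes the last row of $M_d^\nu$ equal to $(0,\dots,0,1)$, so $(M_d^\nu w)_e=w_e$ and alternative~(1) holds. If instead $n_{e-1}=1$, then $n_i\ge1$ for every $i<e$, so $e-n_i+1>i$ and $\bigl(\sigma^{n_i}(C_i)\bigr)_1=0$, while $(C_e)_1=1$; thus the first row of $M_d^\nu$ is $(0,\dots,0,1)$ and $(M_d^\nu w)_1=w_e$, which gives the equivalence claimed in alternative~(2). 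For the bijection claim in~(2) I would invoke Lemma~\ref{bf:inversible'}: $M_d^\nu$ is non-singular, and both $\mathbb{P}$ and $\{v\in\FT^e\mid v_1=0\}$ have dimension $e-1$, so the inclusion is an equality. The main obstacle is purely bookkeeping: keeping the rotation indices straight and handling the cyclic convention in the first-entry formula.
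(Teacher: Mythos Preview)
Your proposal is correct and follows essentially the same approach as the paper: both split on whether $n_{e-1}=0$ or $n_{e-1}=1$, use the upper-triangular structure together with the bound $n_i\le e-i$ to compute the relevant entry of each rotated column, and conclude that the last (respectively first) row of $M_d^\nu$ is $(0,\dots,0,1)$. Your version is a bit more explicit in writing out the rotation formulas and in invoking Lemma~\ref{bf:inversible'} for the bijection claim, but the underlying argument is the same.
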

\begin{proof}
  Take $\nu\,=\,(n_1,\dots n_e)$ such that $n_e=0$ and $n_{i+1}\leq n_i
  \leq n_{i+1}+1$ for all $1\leq i \leq e-1$. Combining $n_i \leq
  n_{i+1}+1$ for $i = e-1$ and $n_e = 0$ gives that $n_{e-1}$ is either
  zero or one.  We consider both possibilities separately.
  
  First, suppose that $n_{e-1}$ equals one. Then, all entries of $\nu$
  before it must be greater than one. That means that, when building
  $M_d^\nu$ from $M_d$, all of its columns except the last are rotated at
  least one position. Fix an index $i$ such that $1\leq i\leq e-1$, and
  consider $c$ the $i$-th column of the matrix $M_d$. We show that the
  first element of $\sigma^{n_i}(c)$ is zero.
        
  By Lemma~\ref{bf:inversible}, we know that $M_d$ is triangular. That
  means that the elements $c_{i+1},\dots, c_e$ are necessarily zeros. But
  the first element of $\sigma^{n_i}(c)$ is $c_{e-n_i+1}$; and from the
  inequality $n_i\leq e-i$ it follows that $e-n_i+1$ is greater or equal
  than $i+1$. Therefore, the first element of $\sigma^{n_i}(c)$ is zero.
        
  Because $i$ is any index between $1$ and $e-1$, it follows that the first
  $e-1$ columns of $M_d^\nu$ have a zero as their first coordinate. If $w$
  is an even vector, $M_dw$ is a linear combination of vectors which start
  with zero, and therefore $M_dw$ also starts with a zero. Conversely, if
  $w$ begins with a one then $M_dw$ must be an odd vector, because it's a
  linear combination of elements that start with a zero and the last column
  of $M_d^\nu$, which is the vector of ones. We conclude that $w$ is an
  even vector if and only if $M_d^\nu w$ starts with a zero.
        
  The case where $n_{e-1}$ equals zero is analogous, and we give an outline
  of the proof. First, prove that the first $e-1$ columns of $M_d^\nu$ are
  even vectors. Then, $M_d^\nu w$ is even if and only if $w$ is even.
\end{proof}

\section{Proof of the main theorem}

We first prove the theorem for $\lambda$ and at the end we explain how to
generalize the result to each number in the family $\mathcal{L}$. For any
given non-negative $d$, we set $e=2^d$ and show that for an appropriate
choice of increasing $N$ which depends on $d$ and $e$, $F_N(2)$ diverges.
For this we show that some selected patterns have too many occurrences in
$\lambda_d$.  More precisely, we count occurrences of binary words of
length $d+e$,

\begin{displaymath}
  a = a_1a_2\dots a_{d+e} 
\end{displaymath}  
such that
\begin{displaymath}
  \overline{a_1\dots a_d} = a_{e+1}\dots a_{e+d}. 
\end{displaymath}
The reason for this choice comes from Lemma~\ref{bf:opposites} and it will
soon become clear.  We need some terminology.  Given a word $a$ as above
and an occurrence of $a$ in $\lambda_d$,
\begin{enumerate}
\item let $k$ be  the number of zeros in $a_d\dots a_e$;
\item let $n$ be  the index  such that the $a$ occurs in $M_d w_n$;
\item let $z$ be the position in~$a$ that matches with the $e$-th (that is
  the last) symbol of $M_dw_n$.
\end{enumerate}

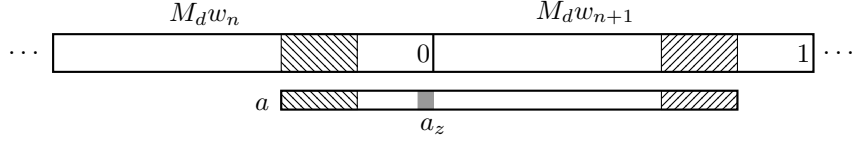
\begin{figure}[h]
  \begin{center}
    \begin{tikzpicture}
      \draw[thick] (0,0) rectangle (10,0.5);
      \draw[thick] (5,0)--(5,0.5);
      \draw[thin, pattern=north west lines] (3,0) rectangle (4,0.5);
      \draw[thin, pattern=north east lines] (8,0) rectangle (9,0.5);
      \node [left] at (0,0.25) {$\dots$};
      \node [right] at (10,0.25) {$\dots$};
      \node [anchor=base] at (4.87,0.125) {$0$};
      \node [anchor=base] at (9.87,0.125) {$1$};
      \node [above] at (2,0.5) {$M_dw_n$};
      \node [above] at (7,0.5) {$M_dw_{n+1}$};
      \filldraw[black!40!white] (4.8,-0.5) rectangle (5,-0.25);
      \draw[thick] (3,-0.5) rectangle (9,-0.25);
      \draw[thin, pattern=north west lines] (3,-0.5) rectangle (4,-0.25);
      \draw[thin, pattern=north east lines] (8,-0.5) rectangle (9, -0.25);
      \node at (2.75,-0.5) [anchor=base] {$a$};
      \node [below] at (5,-0.5) {$a_z$};
      \end{tikzpicture}
  \end{center}
  \caption{An occurrence of $a$}
  \label{fig:matchWithinL}
\end{figure}

We require $n$ to be an even number and $z$ to be in the range $d\leq z\leq
e$.  The latter is to prevent the word $a$ from spanning over more
than two words, and the former is to ensure that a match for the first $d$
letters automatically yields a match for the last $d$ letters (a
combination of Lemma~\ref{bf:opposites} and the hypothesis over $a$). In
addition, by Lemma~\ref{bf:even-invariance} we know that $M_dw_n$ is an
even word, and therefore $a_z$ must be zero.

We fix $k$ and count all possible occurrences in $\lambda_d$ of every possible
word $a$.  There are exactly
\begin{displaymath}
  2^{d-1}\binom{e-d+1}{k}
\end{displaymath}
words $a$ with $k$ zeros in $a_d\dots a_e$. For every one of them, we have
a choice of $k$ different $z$, because we know that $a_z$ must be zero.  We
claim that each of those choices for $z$ correspond to an actual occurrence
of $a$ in $\lambda_d$.  Let's suppose that the binary word $a$, whose length is
$d+e$, starts in $M_dw_n$ and continues in $M_dw_{n+1}$. Then, it must hold
that, for some $z$,
\begin{displaymath}
  a_1\dots a_z = (M_dw_n)_{e-z+1}\dots (M_dw_n)_e
\end{displaymath}
and  
\begin{displaymath}
  a_{z+1}\dots a_e = (M_dw_{n+1})_1\dots (M_dw_{n+1})_{e-z}.
\end{displaymath} 
So, Lemma~\ref{bf:opposites} allows us to conclude that
\begin{displaymath}
  M_dw_n = \overline{a_{z+1}\dots a_e}a_1\dots a_z.
\end{displaymath}

\begin{figure}[t]
        \centering
        \begin{tikzpicture}
        \filldraw[black!60!white] (0,0) rectangle (3,0.5);
        \filldraw[black!30!white] (4,0) rectangle (5,0.5);
        \filldraw[black!30!white] (5,0) rectangle (8,0.5);
        \draw[thick] (0,0) rectangle (10,0.5);
        \draw[thick] (5,0)--(5,0.5);
        \draw[thin, pattern=north west lines] (3,0) rectangle (4,0.5);
        \draw[thin, pattern=north east lines] (8,0) rectangle (9,0.5);
        \node [left] at (0,0.25) {$\dots$};
        \node [right] at (10,0.25) {$\dots$};
        \node [above] at (3.3,0.5) {$M_dw_n$};
        \node [above] at (8.3,0.5) {$M_dw_{n+1}$};
        
        \filldraw[black!30!white] (4,-0.90) rectangle (5,-0.65);
        \filldraw[black!30!white] (5,-0.90) rectangle (8,-0.65);
        \draw[thick] (3,-0.90) rectangle (9,-0.65);
        \draw[thin] (5,-0.90) -- (5,-0.65);
        \draw[thin, pattern=north west lines] (3,-0.90) rectangle (4,-0.65);
        \draw[thin, pattern=north east lines] (8,-0.90) rectangle (9, -0.65);
        \node at (2.75,-0.90) [anchor=base] {$a$};
        
        \draw [->,thick,dotted] (4.5,-0.60)--(4.5,-0.1);
        \draw [->,thick,dotted] (6.5,-0.60)--(6.5,-0.1);
        \draw [->,thick,dotted] (6.5,0.6) to [out=160,in=20] (1.5,0.6);
        
        \end{tikzpicture}
        \caption{Given an $a$ and a choice of $z$, 
        it is possible to find an unique position within $\lambda_d$ where the word $a$ occurs with alignment~$z$.}
        \label{fig:theMatchDeterminesN}
\end{figure}
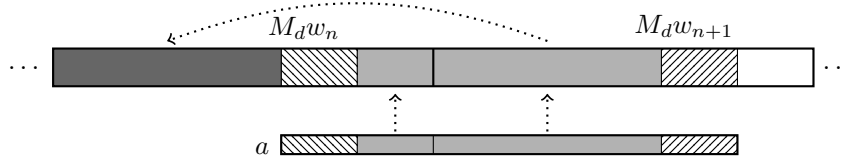

By Lemma~\ref{bf:inversible}, we know that there exists some $w_n$ that
satisfies this equation and by Lemma~\ref{bf:even-invariance}, $n$ must be
an even number.  Therefore, given a choice of $z$ there is an occurrence of
$a$.  We conclude that for every choice of $k$, and for every word $a\in
\{0,1\}^{e+d}$ with $k$ zeros in $a_d\dots a_e$, we have exactly $k$
occurrences within~$\lambda_d$.

We now prove that the sequence of fractional parts of $(2^n \lambda)_{n\geq 1}$
does not have Poissonian pair correlations. Take $s=2$ and $N=2^{d+e+1}$.
We prove that $\lim_{d\to\infty}F_N(s) = \infty$.  In order to do that, we
note that two different occurrences of the same word $a\in\{0,1\}^{e+d}$
correspond to two different suffixes of $\lambda$ that share its first
$e+d$ digits.

We write $\{x\}$ to denote $x-\lfloor x\rfloor$, the fractional expansion
of $x$.  If $a$ has two different occurrences within $\lambda$ at positions
$i$ and $j$ then

\begin{align*}
  \Vert \left\{2^i \lambda\right\}-\left\{2^j \lambda\right\} \Vert
  & =  \Vert 0.a_1\dots a_{e+d}\lambda_{i+d+e+1}\dots - 0.a_1\dots a_{e+d}\lambda_{j+d+e+1}\dots \Vert \\
  &  \leq \vert 0.a_1\dots a_{e+d}\lambda_{i+d+e+1}\dots - 0.a_1\dots a_{e+d}\lambda_{j+d+e+1}\dots \vert \\
  & <  2^{-\left(e+d\right)} \\
  & = \frac{s}{N}.\\
\end{align*} 
Therefore, if $i$ and $j$ are both no greater than $N$, the pairs $(i,j)$
and $(j,i)$ count for $F_N(s)$.  For indices $i$ and $j$ of $\lambda$ which
correspond to elements of $\lambda_d$ this is the case:
\begin{equation*}
   \vert \lambda_0\dots \lambda_d \vert  \ =\  \sum_{i=0}^{d} \vert \lambda_i \vert 
                                 \  =\  \sum_{i=0}^{d} 2^i 2^{2^i} 
                                 \  =\  \sum_{i=0}^{d} 2^{i+2^i} 
                                 \ <\  2^{2^d+d+1} 
                                 \  =\  N.
\end{equation*} 
Then, we are able to give a lower bound for $F_N(s)$, by taking all
possible pairs of occurrences of every word $a$ which satisfies the
condition $\overline{a_1\dots a_d} = a_{e+1}\dots a_{e+d}$:

\begin{align*}
   F_N(2) & \geq \, \frac{1}{N}\sum_{k=0}^{e-d+1} 2\left(2^{d-1}\binom{e-d+1}{k}\binom{k}{2}\right) \\
          & =  \frac{1}{2^{e+1}} \, \sum_{k=0}^{e-d+1} \binom{e-d+1}{k}\binom{k}{2} \\
          & = \, \frac{1}{2^{e+1}} \, \binom{e-d+1}{2}\,2^{e-d+1-2} \\
         & = \frac{1}{8e}\,(e-d+1)(e-d)
\end{align*} 
In the third step, we used the identity
\begin{displaymath}
  \sum_{k=0}^{n} \binom{n}{k}\binom{k}{2} \,=\, 2^{n-2}\binom{n}{2} 
\end{displaymath}
with $n=(e-d+1)$.  Thus,
\begin{displaymath}
  F_N(2)\,\geq\,\frac{(e-d+1)(e-d)}{8e}
\end{displaymath}
and the last expression diverges as $d\to\infty$ because $e$ is squared in
the numerator but linear in the denominator and $d$ is insignificant with
respect to $e$.  This concludes the proof that the sequence of the
fractional parts of $(2^n\lambda)_{n\geq1}$ does not have Poissonian pair
correlations.

We now explain how the proof extends to for any given constant in
$\mathcal{L}$.  Take $\rho\in\mathcal{L}$. Then $\rho$ can be written as a
concatenation
\begin{displaymath}
  \rho=\rho_0\rho_1\rho_2\dots
\end{displaymath} 
where each $\rho_d$ is a $2^d$-affine necklace.  That means that for every
$d$, there exists a suitable tuple $\nu$ such that
\begin{displaymath}
  \rho_d =  (M_d^\nu w_0)(M_d^\nu w_1)\dots(M_d^\nu w_{2^e-1}).
\end{displaymath} 
We take $s=2$ and $N_d=2^{e+d+1}$ and we prove that the sequence
$F_{N_d}(s)$ diverges as $d\rightarrow\infty$.  As we did for $\lambda$, it
is possible to give a lower bound for $F_{N_d}(s)$ by counting occurrences
within $\rho_d$ of words of length $e+d$.

Fix a non-negative integer $d$. By Lemma~\ref{bf:even-invariance'}, there
are two possibilities: either $M_d^\nu$ maps the subspace of even vectors
$\mathbb{P}$ to itself, or it maps it to the set of vectors beginning with
zero.  In the first case, we can replicate essentially verbatim the
procedure we followed for $\lambda$ to get a lower bound for $F_{N_d}(s)$.
In the second case, we have to slightly alter the argument: $z$ is
redefined to be the index of $a$ such that $a_z$ matches the first letter
of $(M_d^\nu w_{n+1})$, and $k$ is redefined to be the number of ones in
$a_{d+1}\dots a_{e+1}$.  Despite these modifications, we reach the same
lower bound for $F_{N_d}(s)$.  Since it diverges as $d\rightarrow\infty$ we
conclude that the sequence of the fractional parts of $(2^n \rho)_{n\geq1}$
does not have Poissonian pair correlations.

\section*{Acknowledgements}
The authors are members of the Laboratoire International Associé SINFIN,
Université Paris Diderot-CNRS/Universidad de Buenos Aires-CONICET).
\bigskip

\bibliographystyle{plain}
\bibliography{paircorr}

\begin{thebibliography}{10}

\bibitem{ALP2018}
C.~Aistleitner, T.~Lachmann, and F.~Pausinger.
\newblock Pair correlations and equidistribution.
\newblock {\em Journal of Number Theory}, 182:206--220, 2018.

\bibitem{BecherCarton2018}
V.~Becher and O.~Carton.
\newblock Normal numbers and perfect necklaces.
\newblock {\em Journal of Complexity}, page to appear, 2019.

\bibitem{BecherHeiber}
V.~Becher and P.A. Heiber.
\newblock On extending de bruijn sequences.
\newblock {\em Information Processing Letters}, 111:930--932, 2011.

\bibitem{ElBazMarklofVinograd}
D.~El-Baz, J.~Marklof, and I.~Vinogradov.
\newblock The two-point correlation function is poisson.
\newblock {\em Proceedings of the American Mathematical Society},
  143(7):2815--2828, 2015.

\bibitem{GrepstadLarcher}
Sigrid Grepstad and Gerhard Larcher.
\newblock On pair correlation and discrepancy.
\newblock {\em Archiv der Mathematik}, 109(2):143--149, Aug 2017.

\bibitem{LarcherStockinger}
G.~Larcher and W.~Stockinger.
\newblock Pair correlation of sequences $(\{a_n \alpha\})_{n\in \mathbf N }$
  with maximal order of additive energy.
\newblock {\em Mathematical Proceedings of the Cambridge Philosophical
  Society}, page arXiv:1802.02901, 2019.

\bibitem{LS}
G.~Larcher and W.~Stockinger.
\newblock Some negative results related to poissonian pair correlation
  problems.
\newblock {\em Archiv der Mathematik}, XXX(X), 2019.

\bibitem{Levin1999}
M.~B. Levin.
\newblock On the discrepancy estimate of normal numbers.
\newblock {\em Acta Arithmetica}, 88(2):99--111, 1999.

\bibitem{PirsicStockinger}
\'I. Pirsic and W.~Stockinger.
\newblock The champernowne constant is not poissonian.
\newblock {\em Functiones et Approximatio Commentarii Mathematici}, to
  appear:to appear, 2019.

\bibitem{RudnickSarnak}
Z\'eev Rudnick and Peter Sarnak.
\newblock The pair correlation function of fractional parts of polynomials.
\newblock {\em Comm. Math. Phys.}, 194(1):61--70, 1998.

\bibitem{RudnickZaharescu1999}
Ze\'ev Rudnick and Alexandru Zaharescu.
\newblock A metric result on the pair correlation of fractional parts of
  sequences.
\newblock {\em Acta Arithmetica}, 89(1999):283--293, 1999.

\bibitem{Stoneham1973}
R.~Stoneham.
\newblock On absolute $(j, \varepsilon)$-normality in the rational fractions
  with applications to normal numbers.
\newblock {\em Acta Arithmetica}, 22(3):277--286, 1973.

\bibitem{Ugalde}
E.~Ugalde.
\newblock An alternative construction of normal numbers.
\newblock {\em Journal de Th\'eorie des Nombres de Bordeaux}, 12:165--177,
  2000.

\bibitem{RudnickSarnackZaharescu}
P.~Sarnak Z.~Rudnick and A.~Zaharescu.
\newblock The distribution of spacings between the fractional parts of $n^2
  \alpha$.
\newblock {\em Invent. Math.}, 145(1):3--57, 2001.

\end{thebibliography}
\bigskip

{\small
\begin{minipage}{\textwidth}
\noindent
Ver\'onica Becher \\
Departamento de  Computaci\'on,
Facultad de Ciencias Exactas y Naturales \& ICC \\
Universidad de Buenos Aires \&  CONICET, Argentina \\
vbecher@dc.uba.ar
\bigskip\\
Olivier Carton \\
Institut de Recherche en Informatique Fondamentale \\
Universit\'e Paris Diderot, France \\
Olivier.Carton@irif.fr
\bigskip\\
Igmacio Mollo Cunningham\\
Departamento de Matem\'atica\\
Facultad de Ciencias Exactas y Naturales \\
Universidad de Buenos Aires, Argentina \\
imcgham@gmail.com
\end{minipage}
}

\end{document}